\documentclass[a4paper,12pt]{article}
\usepackage[utf8]{inputenc}
\usepackage{mathrsfs,amsfonts,amssymb,amsmath,array,times,mathptmx}

\newcommand\unit{{\mathbf{1}}}%\mathtrac 1}}
%\mathtrac 1}}

\newcommand\sg[2]{\left(#2_{#1}\right)_{#1\ge0}}

\newtheorem{theorem}{Theorem}[section]
\newtheorem{proposition}[theorem]{Proposition}
\newtheorem{lemma}[theorem]{Lemma}
\newtheorem{definition}[theorem]{Definition}
\newtheorem{corollary}[theorem]{Corollary}
\newtheorem{remark}[theorem]{Remark}

\newenvironment{proof}[1][]{\begin{trivlist}\item[\hspace{3pt}
\textit{Proof#1.}]}{\rule{0.3em}{1.5ex}\end{trivlist}}
\newcommand{\set}[1]{\left\{ #1\right\}}
\newcommand{\norm}[1]{\left\| #1\right\|}
\newcommand{\scpro}[2]{\langle #1,#2\rangle}
\newcommand\uvtimes{{\mathbin{\mathop{~\otimes~}\limits_{u,v}}}}
\newcommand\uvptimes{{\mathbin{\mathop{~\otimes~}\limits_{u',v'}}}}
\newcommand\uvalltimes{{\mathbin{~\mathop{\otimes}\nolimits^0~}}}%_{\mbox{a.u.}}}}}
\renewcommand\uvtimes{{{}_u{\otimes}_{v}}}
\renewcommand\uvptimes{{{}_{u'}{\otimes}_{v'}}}
\newcommand\E{\mathcal{E}}
\newcommand\F{\mathcal{F}}

\newcommand{\hl}[1]{\textbf{\textit{#1}}}
\newcommand{\sB}{\mathcal{B}}
\newcommand{\vt}{\vartheta}

\begin{document}

\bibliographystyle{amsalpha}

\title{\Large\bfseries {The Spatial Product of  Arveson Systems is Intrinsic}}
\author{B.V.\ Rajarama Bhat, Volkmar Liebscher, Mithun Mukherjee,\\and Michael Skeide
}
\date{June 2010
%(progn (beginning-of-line)(delete-backward-char 45)(insert (shell-command-to-string "date +\"\\day%d\\month%m\\year%Y\\relax\\today,\ %T\"")))
}

\maketitle
\begin{abstract}
\noindent
We prove  that the spatial product of two spatial Arveson systems is independent of the choice of the reference units. This also answers the same question for the minimal dilation the Powers sum of two spatial CP-semigroups: It is independent up to cocycle conjugacy.
\end{abstract}

\section{Introduction}

Arveson \cite{Arv89} associated with every \hl{$E_0$-semigroup} (a semigroup of unital endomorphisms) on $\sB(H)$ its \hl{Arveson system} (a family of Hilbert spaces $\E=\sg t\E$ with an associative identification $\E_s\otimes\E_t=\E_{s+t}$). He showed that $E_0$-semi\-groups are classified by their Arveson system up to cocycle conjugacy. By a \hl{spatial} Arveson system we understand a pair $(\E,u)$ of an Arveson system $\E$ and a \hl{unital unit} $u$ (that is a section $u=\sg t u$ of unit vectors $u_t\in\E_t$ that factor as $u_s\otimes u_t=u_{s+t}$). Spatial Arveson systems have an index, and this index is additive under the tensor product of Arveson systems.

Much of this can be carried through also for product systems of Hilbert modules and $E_0$-semigroups on $\sB^a(E)$, the algebra of all adjointable operators on a Hilbert module; see the conclusive paper Skeide \cite{Ske08p1} and its list of references. However, there is no such thing as the tensor product of product systems of Hilbert modules. To overcome this, Skeide \cite{Ske06d} (preprint 2001) introduced the product of spatial product systems (henceforth, the \hl{spatial product}), under which the index of spatial product systems of Hilbert modules is additive.

It is known that the spatial structure of a spatial Arveson system $\sg t\E$ depends on the choice of the \hl{reference unit} $\sg t u$. In fact, Tsirelson \cite{Tsi08} showed that if $\sg t v$ is another unital unit, then there need not exist an automorphism of $\sg t\E$ that sends $\sg t u$ to $\sg t v$. Also the spatial product depends \textit{a priori} on the choice of the reference units of its factors. This immediately raises the question if different choices of references units give isomorphic products or not. In these notes we answer this question in the affirmative sense for the spatial product of Arveson systems.

\vspace{2ex}
For two Arveson systems $\sg t\E$ and $\sg t\F$ with reference units ($\sg t u$ and $\sg t v$, respectively, their spatial product can be identified with the subsystem of the tensor product generated by the subsets $u_t\otimes\F_t$ and $\E_t\otimes v_t$. This raises another question, namely, if that subsystem is all of the tensor product or not. This has been answered in the negative sense by Powers \cite{Pow04}, resolving the same question for a related problem. Let us describe this problem very briefly.

Suppose we have two $E_0$-semigroups $\vt^i=\sg t{\vt^i}$ on $\sB(H^i)$ with \hl{intertwining} semigroups $\sg t{U^i}$ of isometries in $\sB(H^i)$ (that is, $\vt^i_t(a^i)U_t=U_ta^i_t$). Intertwining semigroups correspond one-to-one with unital units of the associated Arveson systems $\sg t{\E^i}$, so that these are spatial. Then by
$$
T
\begin{pmatrix}
a_{11}&a_{12}\\a_{21}&a_{22}
\end{pmatrix}
~:=~
\begin{pmatrix}
\vt^1_t(a_{11})&{U^1_t}^*a_{12}U^2_t\\{U^2_t}^*a_{21}U^1_t&\vt^2_t(a_{22})
\end{pmatrix}
$$
we define a Markov semigroup on $\sB(H^1\oplus H^2)$. Its unique \hl{minimal dilation} (see Bhat \cite{Bha96}) is an $E_0$-semigroup (fulfilling some properties). At the 2002 Workshop Advances in Quantum Dynamics in Mount Holyoke, Powers asked for the cocycle conjugacy class (that is, for the Arveson system) of that $E_0$-semigroup. More precisely, he asked if it is the cocycle conjugacy class of the tensor product of $\vt^1$ and $\vt^2$, or not. Still during the workshop Skeide (see the proceedings \cite{Ske03c}) identified the Arveson system of that \hl{Powers sum} as the spatial product system of the Arveson systems of $\vt^1$ and $\vt^2$. So, Powers' question is equivalent to the question if the spatial product is the tensor product, or not.

In \cite{Pow04} Powers answered the former question in the negative sense and, henceforth, also the latter. He left open the question if the cocycle conjugacy class of the minimal dilation of the Powers sum depends on the choice of the intertwining isometries. Our result of the present notes tells, no, it doesn't depend. We should say that Powers in \cite{Pow04} to some extent considered the Powers sum not only for $E_0$-semigroups but also for those CP-semigroups he called as \textit{spatial}. We think that his definition of spatial CP-semigroup is too restrictive, and prefer to use Arveson's definition \cite{Arv97}, which is much wider; see Bhat, Liebscher, and Skeide \cite{BLS10}. The definition of Powers sum easily extends to those CP-semigroups and the relation of the associated Arveson system of the minimal dilations is stills the same: The Arveson system of the sum is the spatial product of the Arveson systems of the addends; see Skeide \cite{Ske10}. Therefore, our result here also applies to the more general situation.

\begin{remark}
It should be noted that the result is visible almost at a glance when the intuition of random sets to describe spatial Arveson systems is available; see \cite{Lie09,Tsi00p1}. However, in order to make this clear a lot of random set techniques had to be explained, so we opted to give a plain Hilbert space proof. Although this is, maybe, not too visible, the proof here is nevertheless very much inspired by the intuition coming from random sets. We will explain that intuition elsewhere (\cite{BLS08p1}).
\end{remark}

\paragraph{Acknowledgments} This work began with an RiP stay at Mathematisches For\-schungsinstitut Oberwolfach in May 2007 and was almost completed during a stay of V.L.\ at Universit\`a del Molise, Campobasso, 2009. V.L.\  thanks M.S.\  for warm hospitality during the latter stay. M.S.\ is supported by funds from the Italian MIUR (PRIN 2007) and the Dipartimento S.E.G.e S.

 \section{Arveson  systems}

\begin{definition}  \label{defi:AS}
An \hl{Arveson  system} is a measurable family $\E=(\E_t)_{t\ge0}$ of separable Hilbert spaces  endowed with a measurable family of  
unitaries $V_{s,t}:\E_s\otimes \E_t\mapsto \E_{s+t}$ for all $s,t\ge0$ such that for all $r,s,t\ge0$
  \begin{displaymath}
    V_{r,s+t}\circ(\unit_{\E_r}\otimes V_{s,t})= V_{r+s,t}\circ( V_{r,s}\otimes\unit_{\E_t}).
  \end{displaymath}
  \end{definition}
 
 \begin{remark}\label{rem:notation}
In the sequel, we shall omit the $V_{s,t}$ and simply identify $\E_s\otimes\E_t$ with $\E_{s+t}$. This lightens the formulae, but requires a certain flexibility (see Proposition \ref{prop:inductive limit} or the proof of Lemma \ref{lem1}) when interpreting correctly operators on tensor products of Arveson systems.
\end{remark}
  
    \begin{remark}
Note that Definition \ref{defi:AS} is equivalent to Arveson's in \cite{Arv89}; see \cite[Lemma 7.39]{Lie09}. The only difference is that Definition \ref{defi:AS} allows for one-dimensional and zero-dimensional Arveson systems. The latter is necessary in view of the following property.
  \end{remark}

By \cite[Theorem 5.7]{Lie09}, for every Arveson system $\mathcal {E}$ the set
\begin{displaymath}
  \mathscr{S}(\mathcal{E}):=\set{\mathcal {F}: \mbox{product subsystem of }\mathcal{E}}
\end{displaymath}
forms a (complete) lattice with the lattice operations $\E'\wedge\F'=(\E'_t\cap\F'_t)_{t\ge0}$ and $\E'\vee\F'$ defined as the smallest Arveson subsystem containing both $\E'$ and $\F'$.

      \begin{remark}
By \cite[Theorem 7.7]{Lie09}, the algebraic structure of an Arveson system determines the measurable structure completely.   
  \end{remark}

  \begin{definition}
    A \hl{unit} $ u$ of an Arveson  system is a measurable non-zero section $\sg t u$ through $\sg t\E$, which satisfies for all $s,t\ge0$
    \begin{displaymath}
       u_{s+t}= u_s\otimes  u_t.
    \end{displaymath}
If $ u$ is \hl{unital} ($\norm{ u_t}=1\forall t\ge0$), the pair $(\E, u)$ is also called a \hl{spatial} Arveson system.
  \end{definition}

For Hilbert spaces, the spatial product from Skeide \cite{Ske06d} can be defined as a subsystem of the tensor product in the following way.

\begin{definition}
Let $(\E,u)$ and $(\F,v)$ be two spatial Arveson systems. We define their\hl{ spatial  product} as   
  \begin{displaymath}
    \E\uvtimes{}\F:=(u\otimes\F)\vee(\E\otimes v)\subset\E\otimes\F
  \end{displaymath}
\end{definition}

That this coincides with the product in \cite{Ske06d} follows either from the universal property \cite[Theorem 5.1]{Ske06d} that characterizes it, or after Proposition \ref{prop:inductive limit} below, that identifies directly the pieces from the inductive limit by which the product is constructed in \cite{Ske06d}.

Let
\begin{displaymath}
  \Pi^t:=\set{(t_1,\dots,t_n):n\in \set{1,2,\dots}, t_1>0, t_1+\dots+t_n=t}
\end{displaymath}
denote the set of interval partitions of $[0,t]$ (parametrized suitably for our purposes). For ${\bf t}=(t_1,\dots,t_n)\in\Pi^t$ and ${\bf s}=(s_1,\dots,s_m)\in\Pi^s$, denote by ${\bf t}\smallsmile{\bf s}:=(t_1,\dots,t_n,s_1,\dots,s_m)\in\Pi^{t+s}$ their \hl{join}.
We order $\Pi^t$ by saying that $(t_1,\dots,t_n)\prec(s_1,\dots,s_m)$ if there exist ${\bf t}_i\in\Pi^{t_i}$ such that \begin{displaymath}
{\bf t}_1\smallsmile\ldots\smallsmile{\bf t}_n={\bf s}.
\end{displaymath}

For any Hilbert subspace $ H$ denote by $H^\perp$ the orthogonal complement of $H$ in the space containing it. 
\begin{proposition}
\label{prop:inductive limit}
Let $(\E, u)$ and $(\F, v)$ be two spatial Arveson systems, and define
\begin{displaymath}
  G^{u,v}_t:= u_t\otimes v_t^\perp\oplus\mathbb{C} u_t\otimes v_t\oplus  u_t^\perp\otimes v_t.
\end{displaymath}
Then for all $t>0$
\begin{align*}\tag{$*$}\label{indlim}
    (\E\uvtimes{}\F)_t=\mathop{\mathrm{lim}}\limits_{(t_1,\dots,t_n)\in \Pi^t}G^{u,v}_{t_{n}}\otimes G^{u,v}_{t_{n-1}}\otimes\dots\otimes G^{u,v}_{t_2}\otimes G^{u,v}_{t_1}
  \end{align*}
\end{proposition}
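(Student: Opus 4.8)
The plan is to show that the right-hand side of \eqref{indlim}, which I denote $\G_t$, is precisely the smallest product subsystem of $\E\otimes\F$ containing $u\otimes\F$ and $\E\otimes v$, and therefore equals $(\E\uvtimes{}\F)_t$ by definition of the join. The starting observation is that, pointwise, $G^{u,v}_t=(u_t\otimes\F_t)+(\E_t\otimes v_t)$ is exactly the linear span of the two generating families. This family of spaces is not itself a subsystem, and the content of the proposition is that closing it up under the product structure of $\E\otimes\F$ produces exactly the join, realized concretely as the inductive limit over finer and finer interval partitions.

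First I would check that the limit in \eqref{indlim} is genuinely an increasing union of subspaces of $(\E\otimes\F)_t=\E_t\otimes\F_t$. Using $u_{s+t}=u_s\otimes u_t$ and $v_{s+t}=v_s\otimes v_t$ together with the flip identification $(\E_s\otimes\F_s)\otimes(\E_t\otimes\F_t)=\E_{s+t}\otimes\F_{s+t}$, a direct computation shows $G^{u,v}_{s+t}\subseteq G^{u,v}_s\otimes G^{u,v}_t$. Consequently, refining a single subinterval $r=r_1+r_2$ replaces a factor $G^{u,v}_r$ by the larger space $G^{u,v}_{r_2}\otimes G^{u,v}_{r_1}\supseteq G^{u,v}_r$, so that $\mathbf t\prec\mathbf s$ yields an honest inclusion $G^{u,v}_{\mathbf t}\subseteq G^{u,v}_{\mathbf s}$ of subspaces of $\E_t\otimes\F_t$. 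Since $(\Pi^t,\prec)$ is directed by common refinement, the limit is simply the closed increasing union $\G_t=\overline{\bigcup_{\mathbf t\in\Pi^t}G^{u,v}_{\mathbf t}}$.

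The main step is to establish the product relation $\G_{s+t}=\G_s\otimes\G_t$. Here I would use the join: reading the tensor factors from right to left as in \eqref{indlim}, one has $G^{u,v}_{\mathbf t\smallsmile\mathbf s}=G^{u,v}_{\mathbf s}\otimes G^{u,v}_{\mathbf t}$ for $\mathbf s\in\Pi^s$, $\mathbf t\in\Pi^t$, where the $\otimes$ on the right is the product of $\E\otimes\F$. The partitions of this form, namely those having the splitting time as a common breakpoint, are cofinal in $\Pi^{s+t}$, since any partition can be refined by inserting that point. Combining this cofinality with the fact that the Hilbert space tensor product commutes with closed increasing unions gives $\G_{s+t}=\overline{\bigcup_{\mathbf s,\mathbf t}G^{u,v}_{\mathbf s}\otimes G^{u,v}_{\mathbf t}}=\G_s\otimes\G_t$. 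Together with measurability of $\G$, which follows since the algebraic structure of an Arveson system determines its measurable structure (cf.\ \cite{Lie09}), this shows that $\G$ is a product subsystem of $\E\otimes\F$.

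Finally, since $G^{u,v}_t\supseteq u_t\otimes\F_t$ and $G^{u,v}_t\supseteq\E_t\otimes v_t$, the coarsest partition $\mathbf t=(t)$ already gives $(u\otimes\F)_t,(\E\otimes v)_t\subseteq G^{u,v}_t\subseteq\G_t$, so $\G$ contains both generating subsystems; minimality of the join then yields $(\E\uvtimes{}\F)\subseteq\G$. For the reverse inclusion, each $G^{u,v}_{t_i}\subseteq(\E\uvtimes{}\F)_{t_i}$ because $(\E\uvtimes{}\F)$ is a product subsystem containing the generators, whence $G^{u,v}_{\mathbf t}\subseteq(\E\uvtimes{}\F)_t$ and, taking the closed union, $\G_t\subseteq(\E\uvtimes{}\F)_t$. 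The two inclusions give the claim. I expect the main obstacle to be the bookkeeping of the flip identifications in the product relation, that is, verifying the join identity and the interchange of $\otimes$ with the inductive limit while keeping the ambient identifications of $\E\otimes\F$ straight; this is precisely the flexibility flagged in Remark \ref{rem:notation}.
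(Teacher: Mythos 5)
Your proposal is correct and follows essentially the same route as the paper's proof: establish monotonicity from $G^{u,v}_{s+t}\subset G^{u,v}_s\otimes G^{u,v}_t$, observe that the inductive limit is a product subsystem containing $u\otimes\F$ and $\E\otimes v$, and conclude by minimality of the join together with the reverse inclusion. You merely spell out in more detail (via cofinality of partitions containing a given breakpoint) what the paper dismisses as "easy to see from the properties of the interval partitions."
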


\begin{proof}      
(See Remark \ref{rem:notation} about notation.) Since $G^{u,v}_t\subset\E_t\otimes\F_t$ and $G^{u,v}_{s+t}\subset G^{u,v}_s\otimes G^{u,v}_t$, the  limit exists due to monotonicity and $(\E\uvtimes\F)_t\subset\E_t\otimes\F_t\forall t\ge0$. From the properties of the interval partitions it is easy to see that in fact the RHS of \eqref{indlim} is a product system in its own right.  

Clearly,  $G^{u,v}_t\supset\E_t\otimes v_t$ and  $G^{u,v}_t\supset u_t\otimes F_t$. Therefore, the RHS of \eqref{indlim} contains both $\E\otimes v$ and $u\otimes \F$. 

On the other side, let $\mathcal{H}\subset \E\otimes \F$ contain both  $\E\otimes v$ and $u\otimes \F$. Then, obviously, $G^{u,v}_t\subset\mathcal{H}_t$. Consequently, $ \E\uvtimes{}\F$ contains the RHS of \eqref{indlim} and the assertion is proved.   
\end{proof}

\begin{remark}
The structure $G_s\otimes G_t\supset G_{s+t}$ is a recurrent theme in the analysis of quantum dynamics, in particular, of CP-semigroup; see \cite{MSchue93,BhSk00,BBLS04,Ske06d,MuSo02,	Mar03,Ske03c,BhMu10}. Recently, it has been formalized by Shalit and Solel \cite{ShaSo09} under the name of \hl{subproduct systems} (of Hilbert modules), and by Bhat and Mukherjee under the name of \hl{inclusion systems} (only the Hilbert case). Once for all, \cite{BhMu10} prove by the same inductive limit construction that every subproduct or inclusion system of Hilbert spaces embeds into an Arveson system. In Shalit and Skeide \cite{ShaSk10p}, the same will be shown for modules by reducing it to the case of CP-semigroups considered by Bhat and Skeide \cite{BhSk00}. While the spatial product may be viewed as \hl{amalgamation} of two spatial product systems over their reference units, \cite{BhMu10} generalize this to an \hl{amalgamation} over a contraction morphism between two (not necessarily spatial) Arveson systems. This applies, in particular, to the amalgamation of two spatial Arveson systems of not necessarily unital units, and answers Powers' question for the Markov semigroup obtained from non necessarily isometric intertwining semigroups.
\end{remark}

  \section{Universality of the spatial product}

Our aim is to prove the following theorem.

  \begin{theorem}\label{thm1}
Let $(\E, u)$, $(\E, u')$, $(\F, v)$ and  $(\F, v')$ be  spatial product systems. Then
\begin{displaymath}
  \E\uvtimes\F\cong\E\uvptimes\F.
\end{displaymath}
  \end{theorem}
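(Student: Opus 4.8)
The plan is to reduce Theorem~\ref{thm1} to a single change of reference unit and then to attack that case through the inductive limit of Proposition~\ref{prop:inductive limit}. First I record that the spatial product is commutative: the flip $\E\otimes\F\to\F\otimes\E$ is an isomorphism of Arveson systems carrying $G^{u,v}_t$ onto $G^{v,u}_t$, so $\E\uvtimes\F\cong(v\otimes\E)\vee(\F\otimes u)$. Granting this, it is enough to prove the \emph{one--unit lemma}: $\E\uvtimes\F\cong(u'\otimes\F)\vee(\E\otimes v)$, i.e. that replacing $u$ by $u'$ while keeping $v$ yields an isomorphic product. Indeed, applying the lemma on the $\E$--side, flipping, applying it again on the (now leading) $\F$--side to replace $v$ by $v'$, and flipping back chains $\E\uvtimes\F$, $(u'\otimes\F)\vee(\E\otimes v)$, its flip, and $\E\uvptimes\F$ into a single isomorphism.

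For the one--unit lemma I would work inside $\E\otimes\F$ and use \eqref{indlim} for both products. Writing $P_t$, $P'_t$, $Q_t$ for the projections of $\E_t$ onto $\mathbb{C}u_t$, of $\E_t$ onto $\mathbb{C}u'_t$, and of $\F_t$ onto $\mathbb{C}v_t$, the space $G^{u,v}_t$ is the range of $P_t\otimes\unit+(\unit-P_t)\otimes Q_t$ and $G^{u',v}_t$ the range of $P'_t\otimes\unit+(\unit-P'_t)\otimes Q_t$. The connecting maps of \eqref{indlim} are the inclusions $\iota^{u,v}\colon G^{u,v}_{s+t}\hookrightarrow G^{u,v}_s\otimes G^{u,v}_t$ sitting inside $\E\otimes\F$, and likewise $\iota^{u',v}$. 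An isomorphism of the inductive limits is produced by any family of unitaries $W_t\colon G^{u,v}_t\to G^{u',v}_t$ with $(W_s\otimes W_t)\circ\iota^{u,v}=\iota^{u',v}\circ W_{s+t}$, so the task is to build such a compatible family.

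Here lies the main obstacle, and it is instructive to see why the obvious attempts fail. If $U_t$ is a unitary on $\E_t$ with $U_tu_t=u'_t$, then $W_t:=U_t\otimes\unit_{\F_t}$ does carry $G^{u,v}_t$ onto $G^{u',v}_t$; but writing out the intertwining relation on the summand $u_t^\perp\otimes v_t$ forces $U_{s+t}$ and $U_s\otimes U_t$ to agree on $u_{s+t}^\perp$, hence --- since they also agree on $u_{s+t}$ --- to coincide, which would make $U$ an automorphism of $\E$ sending $u$ to $u'$. By Tsirelson \cite{Tsi08} such an automorphism need not exist. The same computation shows that \emph{no} family $W_t$ respecting the three summands of $G^{u,v}_t$ can intertwine the inclusions. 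Thus the isomorphism must genuinely mix the $\E$--excursion summand $u_t^\perp\otimes v_t$ with the summand $u_t\otimes\F_t$, and in fact cannot be presented on the single blocks $G_t$ at all; it has to be manufactured on the inductive limit itself.

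To overcome this I would follow the random--set picture of \cite{Lie09,Tsi00p1}. A vector of the product is an $L^2$--superposition over finite families of pairwise disjoint subintervals decorated either by an $\E$--excursion in some $u_t^\perp$ or by an $\F$--excursion in some $v_t^\perp$, the complementary time carrying the joint unit; the deletion of the two mixed blocks $(u_s\otimes v_s^\perp)\otimes(u_t^\perp\otimes v_t)$ and $(u_s^\perp\otimes v_s)\otimes(u_t\otimes v_t^\perp)$ in passing from $G^{u,v}_s\otimes G^{u,v}_t$ down to $G^{u,v}_{s+t}$ is exactly the constraint that $\E$-- and $\F$--excursions never overlap. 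Replacing $u$ by $u'$ should then be implemented by a Girsanov/Radon--Nikodym type density built from $\langle u_t,u'_t\rangle=e^{-\bar\mu t}$, acting only on the $\E$--excursion data and on the weight of the unit part, with the compensating defect absorbed into the $\F$--slot precisely because the two excursion sets are disjoint. The hard part --- and the real content of the proof --- is to make this limiting operator precise and to verify that it is a well defined unitary intertwining the product--system structure; this is a second--quantization/cocycle estimate in which the disjointness of excursions is what makes the unit--change consistent across refinements of the partitions, and is where I expect essentially all of the difficulty to concentrate.
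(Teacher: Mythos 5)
Your reduction to a single change of reference unit is sound and parallels the symmetry argument the paper uses, and your diagnosis of why a blockwise family of unitaries $W_t\colon G^{u,v}_t\to G^{u',v}_t$ cannot intertwine the inclusions (via Tsirelson's non-existence of automorphisms moving $u$ to $u'$) is a correct and genuinely useful observation. But the proposal stops exactly where the proof has to begin: the ``Girsanov/Radon--Nikodym density'' that is supposed to implement the change of unit is never constructed, and you say yourself that essentially all of the difficulty concentrates there. As it stands this is a plan plus an admission that the central step is missing, not a proof.

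The paper's route is structurally different and sidesteps the construction of any unitary. It proves the stronger statement that $\E\uvtimes\F$ and $\E\uvptimes\F$ are \emph{equal} as subsystems of $\E\otimes\F$, and the whole burden is carried by the single containment $\E\otimes v'\subset\E\uvtimes\F$ (Lemma \ref{lem1}): once $\E\otimes v'$ and $u\otimes\F$ both lie in $\E\uvtimes\F$, the generated subsystem $\E\,{}_u{\otimes}_{v'}\,\F$ is contained in $\E\uvtimes\F$, and symmetry in $v\leftrightarrow v'$ and then in the two factors gives equality. The containment itself is an explicit norm computation: one shows $\bigl\|\mathrm{Pr}_{G^{\otimes 2^n}_{2^{-n}}}(\psi\otimes v'_1)\bigr\|\to\|\psi\|$ by expanding the projection over subsets $S\subset\{1,\dots,2^n\}$, using that $\scpro{v_t}{v'_t}=\mathrm{e}^{\gamma t}$ is a contractive semigroup, resumming the orthogonal pieces into $\mathrm{e}^{\gamma 2^{-n}\sum_i(\unit-P_{\frac{i-1}{2^n},\frac{i}{2^n}})}$, and invoking the uniform strong continuity of $(s,t)\mapsto P_{s,t}$ to conclude that this tends strongly to $\unit$. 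This is precisely the quantitative ``cocycle estimate'' your sketch defers to the random-set machinery; without it (or an equivalent), your argument does not close. I would encourage you to either carry out the paper's projection computation or, if you want to pursue the random-set implementation, to accept that the disjointness-of-excursions consistency you describe still has to be proved by an estimate of exactly this kind.
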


Actually, we will prove even more, namely, $\E\uvtimes\F=\E\uvptimes\F$ as subsystems of $\E\otimes\F$. The key of the proof is the following lemma (whose proof we postpone to the very end, after having illustrated the immediate consequences).
  
\begin{lemma}\label{lem1}
\hfill
$\E\otimes v'\subset\E\uvtimes\F$.
\hfill{~~~~~~~~~~~~~~~~~~~~~}
\end{lemma}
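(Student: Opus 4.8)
The plan is to prove the stronger \emph{pointwise} statement: $\xi\otimes v'_t\in(\E\uvtimes\F)_t$ for every $t>0$ and every $\xi\in\E_t$. Since $\xi\mapsto\xi\otimes v'_t$ is isometric (as $\norm{v'_t}=1$) and $(\E\uvtimes\F)_t$ is closed, the set $\mathcal C_t:=\set{\xi\in\E_t:\xi\otimes v'_t\in(\E\uvtimes\F)_t}$ is a closed subspace; moreover, using that both $\E\otimes v'$ and $\E\uvtimes\F$ are product subsystems one checks $\mathcal C_s\otimes\mathcal C_t\subseteq\mathcal C_{s+t}$, and $u_t\in\mathcal C_t$ because $u\otimes\F\subseteq\E\uvtimes\F$ contains $u\otimes v'$. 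Thus $\mathcal C$ is a product subsystem containing the reference unit $u$, and to finish it is enough to verify $\xi\in\mathcal C_t$ for $\xi$ ranging over a \emph{dense} subset of $\E_t$.

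By Proposition \ref{prop:inductive limit}, $(\E\uvtimes\F)_t=\overline{\bigcup_{\mathbf t}\bigotimes_iG^{u,v}_{t_i}}$, the union being increasing along refinements of $\mathbf t=(t_1,\dots,t_n)\in\Pi^t$ (since $G^{u,v}_{a+b}\subset G^{u,v}_a\otimes G^{u,v}_b$). So I only have to approximate $\xi\otimes v'_t$ by elements of $\bigotimes_iG^{u,v}_{t_i}$ as the mesh of $\mathbf t$ tends to $0$, and the optimal such approximant is the orthogonal projection. Writing $p^u_i,p^v_i$ for the projections onto $\mathbb Cu_{t_i}\subset\E_{t_i}$ and $\mathbb Cv_{t_i}\subset\F_{t_i}$ and $q^u_i=\unit-p^u_i$, $q^v_i=\unit-p^v_i$, the complement of $G^{u,v}_{t_i}$ in $\E_{t_i}\otimes\F_{t_i}$ is $u_{t_i}^\perp\otimes v_{t_i}^\perp$, so the projection onto $\bigotimes_iG^{u,v}_{t_i}$ is $\bigotimes_i(\unit-q^u_i\otimes q^v_i)$.

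Next I compute the error. With $\psi=\xi\otimes v'_t$ reshuffled into $\bigotimes_i(\E_{t_i}\otimes\F_{t_i})$, the covariance $\langle v_t,v'_t\rangle=e^{\gamma t}$ of the two units (so that $\beta_i:=1-e^{2\operatorname{Re}\gamma\,t_i}\in[0,1]$, using $\norm{v_{t_i}}=\norm{v'_{t_i}}=1$) gives $\langle v'_{t_i},q^v_iv'_{t_i}\rangle=\beta_i$. Expanding the projection and separating the $\E$- from the $\F$-factors yields, since $\bigotimes_i(\unit-q^u_i\otimes q^v_i)$ is a projection and $\norm{\psi}^2=\norm\xi^2$,
\begin{displaymath}
\norm{\psi-\textstyle\bigotimes_i(\unit-q^u_i\otimes q^v_i)\psi}^2=\Big\langle\xi,\big(\unit-\textstyle\prod_i(\unit-\beta_iq^u_i)\big)\xi\Big\rangle.
\end{displaymath}
As the $q^u_i$ are commuting projections, Bernoulli's inequality on their joint spectrum gives $\unit-\prod_i(\unit-\beta_iq^u_i)\le\sum_i\beta_iq^u_i$, so the error is at most $(\max_i\beta_i)\,\langle\xi,N_{\mathbf t}\xi\rangle$ with $N_{\mathbf t}:=\sum_iq^u_i$. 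Since $\max_i\beta_i\to0$ as the mesh shrinks, the lemma follows for every $\xi$ with $\sup_{\mathbf t}\langle\xi,N_{\mathbf t}\xi\rangle<\infty$.

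It remains to see that such $\xi$ are dense, and this is the crux and the main obstacle. A short computation with $u_{a+b}=u_a\otimes u_b$ shows that $N_{\mathbf t}$ increases along refinements, hence converges to a positive (number) operator $N$ with form domain exactly $\set{\xi:\sup_{\mathbf t}\langle\xi,N_{\mathbf t}\xi\rangle<\infty}$; the required density is the assertion that this form domain is dense, equivalently that the laws of $N_{\mathbf t}$ in the states induced by $\xi$ are tight. For type~I (symmetric Fock) systems this is classical, the exponential vectors being total with finite, Poisson-distributed particle number; in general it reflects the local finiteness of the random set underlying $(\E,u)$, which is precisely the intuition from the Introduction. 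Granting this density, $\mathcal C_t=\E_t$ for all $t$, i.e.\ $\E\otimes v'\subseteq\E\uvtimes\F$. The same argument with $u'$ in place of $v'$ gives $u'\otimes\F\subseteq\E\uvtimes\F$, whence $\E\uvptimes\F\subseteq\E\uvtimes\F$, and by symmetry Theorem \ref{thm1} follows.
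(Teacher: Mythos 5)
Your reduction and your error computation are exactly the paper's: you project $\xi\otimes v'_t$ onto $\bigotimes_iG^{u,v}_{t_i}$, use that these projections increase along refinements to the projection onto $(\E\uvtimes\F)_t$, and arrive at the correct identity
$\norm{\psi-\bigotimes_i(\unit-q^u_i\otimes q^v_i)\psi}^2=\bigl\langle\xi,\bigl(\unit-\prod_i(\unit-\beta_iq^u_i)\bigr)\xi\bigr\rangle$
with $\beta_i=1-\mathrm{e}^{2\operatorname{Re}\gamma\,t_i}$. The gap is in how you then control this quantity. By pulling out $\max_i\beta_i$ and leaving the unweighted number operator $N_{\mathbf t}=\sum_iq^u_i$, you are forced to restrict to $\xi$ in the form domain of $N=\lim N_{\mathbf t}$, and the density of that domain --- which you explicitly leave unproven ("granting this density") --- is not a technicality: it is \emph{false} precisely in the interesting cases. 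For a type II$_0$ system (e.g.\ Tsirelson's random-set examples, which are the systems relevant to Powers' problem), a vector with $\sup_{\mathbf t}\scpro\xi{N_{\mathbf t}\xi}<\infty$ must already lie in $\mathbb{C}u_t$, because the underlying random closed set is almost surely empty or perfect, so the number of partition intervals it charges diverges under refinement. Hence $\mathcal C_t$ would only be shown to contain $\mathbb{C}u_t$, and the argument does not close.

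The repair is to \emph{not} separate the weights from the projections. Since $\operatorname{Re}\gamma\le0$ one has $\beta_i\le(-2\operatorname{Re}\gamma)\,t_i$, so Bernoulli gives the error bound $(-2\operatorname{Re}\gamma)\,\scpro\xi{\sum_it_iq^u_i\,\xi}$, and the \emph{weighted} sum $\sum_it_iq^u_i=\sum_it_i(\unit-P_{s_{i-1},s_i})$ (in the paper's notation) tends to $0$ strongly: $(s,t)\mapsto P_{s,t}$ is strongly continuous (\cite[Proposition 3.18]{Lie09}, \cite[Proposition 8.9.9]{Arv03}), hence uniformly so on the compact simplex, $P_{t,t}=\unit$, and the sum is a Riemann sum for $\int(\unit-P_{s,s})\,\d s=0$. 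This works for \emph{every} $\xi\in\E_t$, with no density argument and no need for your subsystem $\mathcal C$. The paper packages exactly this cancellation in closed form, writing $\prod_i\bigl(\mathrm{e}^{\gamma t_i}(\unit-P_i)+P_i\bigr)=\mathrm{e}^{\gamma\sum_it_i(\unit-P_i)}\to\mathrm{e}^0=\unit$ strongly; the exponentially small coefficient $\mathrm{e}^{\gamma t_i}$ on each $\unit-P_i$ is what tames the otherwise unbounded number operator. I recommend you replace your last paragraph by this estimate; everything before it is sound.
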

  
\begin{corollary}
$\E{{}_{u}{\otimes}_{v'}}\F\subset\E\uvtimes\F$ and, by symmetry, $\E{{}_{u}{\otimes}_{v'}}\F\supset\E\uvtimes\F$, so $\E{{}_{u}{\otimes}_{v'}}\F=\E\uvtimes\F$. Once more, by symmetry $\E\uvptimes\F=\E{{}_{u}{\otimes}_{v'}}\F$.
\end{corollary}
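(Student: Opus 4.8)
The plan is to assemble the corollary purely formally from Lemma \ref{lem1}, treating that lemma as a black box and exploiting the symmetries of the situation. First I would observe that the amalgamated product $\E{{}_{u}{\otimes}_{v'}}\F$ is, by definition, the subsystem $(u\otimes\F)\vee(\E\otimes v')$ of $\E\otimes\F$, i.e.\ the smallest Arveson subsystem containing both $u\otimes\F$ and $\E\otimes v'$. To get the inclusion $\E{{}_{u}{\otimes}_{v'}}\F\subset\E\uvtimes\F$, it suffices to show that $\E\uvtimes\F$ contains both generators. The inclusion $u\otimes\F\subset\E\uvtimes\F$ is immediate, since $u\otimes\F$ is one of the two defining generators of $\E\uvtimes\F=(u\otimes\F)\vee(\E\otimes v)$. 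The other inclusion, $\E\otimes v'\subset\E\uvtimes\F$, is exactly Lemma \ref{lem1}. Since $\E\uvtimes\F$ is an Arveson subsystem containing both generators and $\E{{}_{u}{\otimes}_{v'}}\F$ is the smallest such, we conclude $\E{{}_{u}{\otimes}_{v'}}\F\subset\E\uvtimes\F$.

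Next I would obtain the reverse inclusion $\E{{}_{u}{\otimes}_{v'}}\F\supset\E\uvtimes\F$ by invoking symmetry, and here it is worth being precise about which symmetry is meant. The roles of $v$ and $v'$ in Lemma \ref{lem1} are interchangeable: applying the lemma with the reference units $v$ and $v'$ swapped yields $\E\otimes v\subset\E{{}_{u}{\otimes}_{v'}}\F$. Then, since $\E{{}_{u}{\otimes}_{v'}}\F$ trivially contains $u\otimes\F$ (again a defining generator) and now also contains $\E\otimes v$, it contains $(u\otimes\F)\vee(\E\otimes v)=\E\uvtimes\F$, which is the desired reverse inclusion. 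Combining the two inclusions gives the equality $\E{{}_{u}{\otimes}_{v'}}\F=\E\uvtimes\F$.

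Finally I would record the third asserted equality $\E\uvptimes\F=\E{{}_{u}{\otimes}_{v'}}\F$. This is the analogue of the first equality but with the left-hand reference unit varied rather than the right-hand one: one reads $\E\uvptimes\F=(u'\otimes\F)\vee(\E\otimes v')$ and $\E{{}_{u}{\otimes}_{v'}}\F=(u\otimes\F)\vee(\E\otimes v')$, which differ only in the choice $u$ versus $u'$ on the $\E$-factor. By the left-right symmetry of the whole construction (interchanging the roles of $(\E,u)$ and $(\F,v)$, so that the left reference unit now plays the role the right one played), Lemma \ref{lem1} also gives $\E'\otimes u\subset$ the relevant product, and the identical two-inclusion argument yields this third equality. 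Chaining the three equalities then gives in particular $\E\uvtimes\F=\E\uvptimes\F$, which is Theorem \ref{thm1}.

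The main obstacle here is not computational but one of bookkeeping: the argument rests entirely on correctly identifying which instance of the symmetry is being applied at each step, since the statement ``by symmetry'' is doing all the work and conceals three genuinely distinct substitutions (swap $v\leftrightarrow v'$; swap the left and right tensor factors; and the combination needed for the $u\leftrightarrow u'$ variant). I expect the only real care required is to verify that Lemma \ref{lem1}, as proved, is genuinely symmetric under each of these relabelings — i.e.\ that its proof uses no special property of $v$ over $v'$ or of the left factor over the right factor — so that each invocation is legitimate. Once that symmetry of the lemma is granted, every inclusion above is a one-line consequence of the universal (smallest-containing-subsystem) characterization of $\vee$.
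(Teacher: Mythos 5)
Your proposal is correct and takes essentially the same route as the paper: there the corollary's statement \emph{is} its proof, namely the two-inclusion argument from Lemma \ref{lem1} plus the $v\leftrightarrow v'$ and left--right (flip) symmetries, which your write-up merely makes explicit. (One harmless slip: in the third step the mirrored lemma yields $u'\otimes\F\subset\E\uvtimes\F$, not ``$\E'\otimes u\subset\dots$''.)
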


This proves $\E\uvtimes\F=\E\uvptimes\F$ and, therefore, Theorem \ref{thm1}.
  
\begin{corollary}
    Denote by $\E^{0}$, $\F^{0}$ the product subsystems of $\E$ and $\F$ generated by all  units of $\E$ and $\F$ respectively. Then for the product with \hl{amalgamation over all units}
  \begin{displaymath}
    \E\uvalltimes\F:=\E\otimes\F^0\vee\E^0\otimes\F
  \end{displaymath}
we find  $\E\uvalltimes\F=\E\uvtimes\F$. 
  \end{corollary}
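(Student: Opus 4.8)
The plan is to prove the sharper statement that every vector $\xi\otimes v'_t$ with $\xi\in\E_t$ already lies in $(\E\uvtimes\F)_t$; since $\xi\mapsto\xi\otimes v'_t$ is isometric with (closed) range $(\E\otimes v')_t$, this yields $\E\otimes v'\subset\E\uvtimes\F$ fibrewise. By Proposition \ref{prop:inductive limit} it is enough to show that the distance of $\xi\otimes v'_t$ to the subspace $G^{u,v}_{t_n}\otimes\dots\otimes G^{u,v}_{t_1}$ tends to $0$ as the partition $\mathbf t=(t_1,\dots,t_n)\in\Pi^t$ is refined, because this subspace sits inside $(\E\uvtimes\F)_t$. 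The observation driving the whole argument is that the orthogonal complement of $G^{u,v}_s$ inside $\E_s\otimes\F_s$ is exactly $u_s^\perp\otimes v_s^\perp$. Writing $p_i$ for the projection onto $u_{t_i}^\perp$ in the $i$-th $\E$-leg and $q_i$ for the projection onto $v_{t_i}^\perp$ in the $i$-th $\F$-leg, the relevant subspace is the range of the commuting product $P_n\cdots P_1$ with $P_i=\unit-R_i$ and $R_i=p_iq_i$.

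For commuting projections one has the elementary bound $\unit-P_n\cdots P_1\le R_1+\dots+R_n$, so the squared distance is at most $\sum_i\norm{R_i(\xi\otimes v'_t)}^2$. Here the decisive simplification enters: since $v'$ is a unit, $v'_t$ factors as $v'_{t_n}\otimes\dots\otimes v'_{t_1}$, and as $\scpro{v_s}{v'_s}$ is multiplicative in $s$ it equals $e^{s\gamma}$ for some $\gamma$ with $\operatorname{Re}\gamma\le0$; hence the $v_{t_i}^\perp$-component $w_{t_i}$ of $v'_{t_i}$ satisfies $\norm{w_{t_i}}^2=1-e^{2t_i\operatorname{Re}\gamma}=O(t_i)$. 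Because all the off-slot $\F$-legs keep unit norm, a direct computation gives $\norm{R_i(\xi\otimes v'_t)}^2=\norm{p_i\xi}^2\,\norm{w_{t_i}}^2$, where $p_i$ acts in the $i$-th factor of $\E_t=\E_{t_n}\otimes\dots\otimes\E_{t_1}$. Factoring out $\sup_i\norm{w_{t_i}}^2/t_i$, which stays bounded as the mesh shrinks, the squared distance is controlled by $g(\mathbf t):=\sum_i t_i\norm{p_i\xi}^2$.

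It remains to show $g(\mathbf t)\to0$ under refinement. Since $u$ factors, the projection onto $u_I^\perp$ of an interval $I$ dominates that of any subinterval $I'\subset I$ (equivalently $Q_I\le Q_{I'}$ for the vacuum projections), so $\norm{p_i\xi}^2$ can only decrease when the $i$-th slot is split; thus $g$ decreases monotonically along refinements. Writing $g(\mathbf t)=\int_0^t\norm{p_{I(a)}\xi}^2\,\d a$, where $I(a)$ is the slot containing $a$, the integrand is dominated by $\norm\xi^2$ and decreases pointwise, so by dominated convergence $g(\mathbf t)$ converges to $\int_0^t\bigl(\inf_{I\ni a}\norm{p_I\xi}^2\bigr)\,\d a$. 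The main obstacle is to show this limiting integrand vanishes for almost every $a$, i.e. that $\norm{p_I\xi}\to0$ as $I\ni a$ shrinks to a point: as $\abs I\to0$ the leg $\E_I$ carries no mass of $\xi$ away from the vacuum $u_I$, so the projection onto $u_I^\perp$ annihilates $\xi$ in the limit. This is a Lebesgue-differentiation statement for the vector $\xi$ with respect to the filtration of interval partitions, and I would extract it from the measurable (continuous) structure of the Arveson system developed in \cite{Lie09}. With it, $g(\mathbf t)\to0$, forcing $\xi\otimes v'_t\in(\E\uvtimes\F)_t$ and proving the lemma.
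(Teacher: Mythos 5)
Your proposal is correct in substance, but note first that what it actually proves is Lemma \ref{lem1} ($\E\otimes v'\subset\E\uvtimes\F$ for an arbitrary unital unit $v'$); to reach the stated corollary you still need the short closing step that every unit can be normalized, so $\E\otimes\F^0=\bigvee_{v'}\E\otimes v'$ and $\E^0\otimes\F=\bigvee_{u'}u'\otimes\F$ over unital units, whence your lemma and its mirror image give $\E\uvalltimes\F\subset\E\uvtimes\F$ (the reverse inclusion being trivial). The paper instead gets the corollary as a two-line lattice computation from the already established identity $\E\uvptimes\F=\E\uvtimes\F$. Where you genuinely diverge is in the proof of the underlying lemma. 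The paper computes $\mathrm{Pr}_{G_{2^{-n}}^{\otimes 2^n}}(\psi\otimes v'_1)$ essentially exactly: expanding the product of the projections $\unit-(\unit-P_i)\otimes(\unit-Q_i)$, using orthogonality in the first tensor factor, and reassembling the sum into the single operator $\mathrm{e}^{\gamma 2^{-n}\sum_i(\unit-P_i)}$, which converges strongly to $\unit$. You replace this exact exponential identity by the union bound $\unit-\prod_iP_i\le\sum_iR_i$ for the commuting projections $R_i$ onto $u_{t_i}^\perp\otimes v_{t_i}^\perp$, combined with $\norm{R_i(\xi\otimes v'_t)}^2=\norm{p_i\xi}^2\bigl(1-\mathrm{e}^{2t_i\mathrm{Re}\,\gamma}\bigr)=O(t_i)\norm{p_i\xi}^2$; this is more elementary and does not require the summands to recombine into anything closed-form, at the price of giving an estimate rather than an identity. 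Both arguments rest on the same analytic input, namely $\unit-P_{s,t}\to0$ strongly as $t-s\to0$, which is exactly what \cite[Proposition 3.18]{Lie09} (or \cite[Proposition 8.9.9]{Arv03}) supplies; moreover, since $(s,t)\mapsto P_{s,t}$ is strongly continuous on the compact simplex, the convergence is \emph{uniform} in the interval, so your deferred ``Lebesgue differentiation'' step needs no almost-everywhere or dominated-convergence argument: $\sum_it_i\norm{p_i\xi}^2\le t\,\max_i\norm{(\unit-P_{s_i,t_i})\xi}^2\to0$ outright as the mesh shrinks. With that simplification and the final join over all units added, your route is a complete and valid alternative.
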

  
\begin{proof}
For every pair of unital units $u$ and $v$ we have
\begin{multline*}
\E\uvalltimes\F
=
\Bigl(\bigvee_{v'}\E\otimes v'\Bigr)\vee\Bigl(\bigvee_{u'}u'\otimes\F\Bigr)
\\
=
\bigvee_{v',u'}(\E\otimes v'\vee u'\otimes\F)
=
\bigvee_{v',u'}(\E\uvptimes\F)
=
\E\uvtimes\F,
\end{multline*}
because $\E\uvptimes\F=\E\uvtimes\F$.
\end{proof}

\begin{corollary}
 Suppose $\F$ is \hl{type I}, that is, $\F=\F^0$. Then $\E\uvtimes \F=\E\otimes \F$. 
  \end{corollary}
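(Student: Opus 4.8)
The plan is to obtain this as an immediate consequence of the preceding corollary, which identified the spatial product with the product amalgamated over all units, $\E\uvtimes\F=\E\uvalltimes\F=\E\otimes\F^0\vee\E^0\otimes\F$, this identity holding for every choice of reference units by Theorem \ref{thm1}. The \hl{type I} hypothesis $\F=\F^0$ is exactly what is needed to collapse the first summand of that join to the full tensor product, so the whole statement reduces to a short lattice computation in $\mathscr{S}(\E\otimes\F)$.

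Concretely, I would first invoke the amalgamation-over-all-units corollary to write $\E\uvtimes\F=\E\otimes\F^0\vee\E^0\otimes\F$. Substituting the hypothesis $\F=\F^0$ turns the first term into $\E\otimes\F$. Since $\E^0$ is a product subsystem of $\E$, so that $\E^0_t\subset\E_t$ for every $t$ and the tensor product is monotone in each factor, the second term satisfies $\E^0\otimes\F\subset\E\otimes\F$. Hence the join is simply $\E\otimes\F$, and one reads off $\E\uvtimes\F=\E\otimes\F$. The reverse inclusion $\E\uvtimes\F\subset\E\otimes\F$ is of course built into the definition of the spatial product, so nothing further is required to close the equality.

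I do not anticipate any genuine obstacle: all of the substance has already been absorbed into the amalgamation-over-all-units corollary, and the type I condition does no more than force $\E\otimes\F^0=\E\otimes\F$. The only point worth making explicit is that the manipulations take place inside the complete lattice $\mathscr{S}(\E\otimes\F)$ of product subsystems, where $\vee$ is the join, so that absorbing the smaller subsystem $\E^0\otimes\F$ into $\E\otimes\F$ is legitimate fibrewise and compatible with the Arveson-system structure.
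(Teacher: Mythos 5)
Your proposal is correct and follows exactly the paper's own argument: invoke the amalgamation-over-all-units corollary to get $\E\uvtimes\F=\E\otimes\F^0\vee\E^0\otimes\F$, use $\F=\F^0$ to identify the first term with $\E\otimes\F$, and absorb the smaller subsystem $\E^0\otimes\F$ into it. Nothing is missing.
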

  
  \begin{proof}
$\E\uvtimes \F^0=\E\uvalltimes\F^0=\E\otimes\F^0\vee\E^0\otimes\F^0=\E\otimes \F^0$, because $\E\otimes\F^0\supset\E^0\otimes\F^0$.
\end{proof}

\begin{proof}[~of Lemma \ref{lem1}]
By Proposition \ref{prop:inductive limit}, it is enough to show that for $\psi\in \E_1$ we have
\begin{displaymath}
(\E\uvtimes \F)_1\ni  \lim_{n\to\infty}\mathrm{Pr}_{G_{2^{-n}}^{\otimes 2^n}}(\psi\otimes v'_1)=\psi\otimes v'_1
\end{displaymath}
which proves that $\E_1\otimes v'_1\subset(\E\uvtimes \F)_1$. (The proof of $\E_t\otimes v'_t\subset(\E\uvtimes \F)_t$ is analogous.)
Since $\mathrm{Pr}_{G_{2^{-n}}^{\otimes 2^n}}$ increases strongly to a projection (the projection onto $(\E\uvtimes \F)_1$), it is sufficient to show that
\begin{displaymath}
\lim_{n\to\infty}\norm{ \big.\smash{ \mathrm{Pr}_{G_{2^{-n}}^{\otimes 2^n}}(\psi\otimes v'_1})}=\norm{\psi}\norm{v'_1}=\norm{\psi}.
\end{displaymath}
For $0\le s< t\le 1$, we define the projections
\begin{displaymath}
  P_{s,t}:=\unit_{\E_s}\otimes \mathrm{Pr}_{u_{t-s}}\otimes \unit_{\E_{1-t}}\in \mathcal{B}(\E_1)
\end{displaymath}
in the factorization $\E_1=\E_s\otimes\E_ {t-s}\otimes \E_{1-t}$. We put $P_{t,t}:=\unit_{\E_1}$. Similarly, we define
\begin{displaymath}
  Q_{s,t}:=\unit_{\F_s}\otimes \mathrm{Pr}_{v_{t-s}}\otimes \unit_{\F_{1-t}}\in \mathcal{B}(\E_1).
\end{displaymath}
 Then
 \begin{multline*}
   \mathrm{Pr}_{(\E_s\otimes\F_s)\otimes G_{t-s}\otimes(\E_{1-t}\otimes\F_{1-t})}
\\
=P_{s,t}\otimes(\unit-Q_{s,t})+(\unit-P_{s,t})\otimes Q_{s,t}+P_{s,t}\otimes Q_{s,t}
\\
=(\unit-P_{s,t})\otimes Q_{s,t}+P_{s,t}\otimes \unit.
 \end{multline*}
(See Remark \ref{rem:notation} about notation!) This gives
\begin{align*}
\mathrm{Pr}_{G_{2^{-n}}^{\otimes 2^n}}
&
=
\prod_{i=1}^{2^n}\left((\unit-P_{\frac{i-1}{2^n},\frac{i}{2^n}})\otimes Q_{\frac{i-1}{2^n},\frac{i}{2^n}}+P_{\frac{i-1}{2^n},\frac{i}{2^n}}\otimes \unit\right)
\\
&
=
\sum_{S\subset\set{1,\dots,2^n}}
\Bigl(\prod_{i\in S}(\unit-P_{\frac{i-1}{2^n},\frac{i}{2^n}})\otimes Q_{\frac{i-1}{2^n},\frac{i}{2^n}}\Bigr)\Bigl(\prod_{i\notin S}P_{\frac{i-1}{2^n},\frac{i}{2^n}}\otimes \unit\Bigr)
\\\tag{$**$}\label{*}
&
=
\sum_{S\subset\set{1,\dots,2^n}}
\Bigl(\prod_{i\in S}(\unit-P_{\frac{i-1}{2^n},\frac{i}{2^n}})\prod_{i\notin S}P_{\frac{i-1}{2^n},\frac{i}{2^n}}\Bigr)\otimes \Bigl(\prod_{i\in S}Q_{\frac{i-1}{2^n},\frac{i}{2^n}}\Bigr).
\end{align*}
Since the $\scpro {v_t}{v'_t}$ form a (measurable) contractive semigroup, there is a complex number $\gamma$ with ${\sf Re}|\gamma|\le0$ such that $\scpro {v_t}{v'_t}=\mathrm{e}^{\gamma t}$. If we put
$$
w^S_i
:=
\begin{cases}
v_{\frac{1}{2^n}}&i\in S,
\\
v'_{\frac{1}{2^n}}&i\notin S,
\end{cases}
$$
then
$$
(\prod_{i\in S}Q_{\frac{i-1}{2^n},\frac{i}{2^n}}\Bigr)v'_1
=
\mathrm{e}^{\gamma \frac{\#S}{2^n}}(w^S_1\otimes\ldots\otimes w^S_{2^n}).
$$
Note that $w^S_1\otimes\ldots\otimes w^S_{2^n}$ are unit vectors. Note, too, that in the last line of \eqref{*} the projections $\prod_{i\in S}(\unit-P_{\frac{i-1}{2^n},\frac{i}{2^n}})\prod_{i\notin S}P_{\frac{i-1}{2^n},\frac{i}{2^n}}$ in the first factor are orthogonal for different choices of $S$. We conclude that
\begin{multline*}
\norm{ \big.\smash{ \mathrm{Pr}_{G_{2^{-n}}^{\otimes 2^n}}(\psi\otimes v'_1})}^2
=
\norm{ \Big.\smash{ \sum_{S\subset\set{1,\dots,2^n}}\mathrm{e}^{\gamma\frac{\#S}{2^n}}
\prod_{i\in S}(\unit-P_{\frac{i-1}{2^n},\frac{i}{2^n}})\prod_{i\notin S}P_{\frac{i-1}{2^n},\frac{i}{2^n}}\psi}}^2
\\[2ex]
=
\norm{ \Big.\smash{ \sum_{S\subset\set{1,\dots,2^n}}
\prod_{i\in S}\bigl(\mathrm{e}^{\gamma 2^{-n}}(\unit-P_{\frac{i-1}{2^n},\frac{i}{2^n}})\bigr)\prod_{i\notin S}P_{\frac{i-1}{2^n},\frac{i}{2^n}}\psi}}^2.
\end{multline*}
Next recall that $f(p)=f(0)\unit+(f(1)-f(0))p$ for every entire function $f$ and every projection $p$. We find
\begin{multline*}
\sum_{S\subset\set{1,\dots,2^n}}\prod_{i\in S}\bigl(\mathrm{e}^{\gamma 2^{-n}}(\unit-P_{\frac{i-1}{2^n},\frac{i}{2^n}})\bigr)\prod_{i\notin S}P_{\frac{i-1}{2^n},\frac{i}{2^n}}
\\
=
 \prod_{i=1}^{2^n}\left(\mathrm{e}^{\gamma 2^{-n}}(\unit-P_{\frac{i-1}{2^n},\frac{i}{2^n}})+P_{\frac{i-1}{2^n},\frac{i}{2^n}}\right)
\\
= 
\prod_{i=1}^{2^n}\left(\unit+(\mathrm{e}^{\gamma 2^{-n}}-1)(\unit-P_{\frac{i-1}{2^n},\frac{i}{2^n}})\right)
\\ 
=
\prod_{i=1}^{2^n}\mathrm{e}^{\gamma 2^{-n}(\unit-P_{\frac{i-1}{2^n},\frac{i}{2^n}})}
= 
\mathrm{e}^{\gamma 2^{-n}\sum_{i=1}^{2^n}(\unit-P_{\frac{i-1}{2^n},\frac{i}{2^n}})}.
\end{multline*}
From \cite[Proposition 3.18]{Lie09} (see also \cite[Proposition 8.9.9]{Arv03}), we know that $(s,t)\mapsto P_{s,t}$ is strongly continuous. The simplex $\set{(s,t):0\le s\le t\le1}$ is compact, so the function is even uniformly strongly continuous. This implies that $\unit-P_{s,t}\longmapsto 0$ strongly uniformly as $(t-s)\to 0$. Thus  we obtain that
\begin{displaymath}
  2^{-n}\sum_{i=1}^{2^n}(\unit-P_{\frac{i-1}{2^n},\frac{i}{2^n}})\xrightarrow{~n\to\infty~}\int_0^1(\unit-P_{t,t})\mathrm{d}t=0
\end{displaymath}
strongly.
Since entire functions are strongly continuous, this shows 
\begin{displaymath}
  \sum_{S\subset\set{1,\dots,2^n}}\prod_{i\in S}\bigl(\mathrm{e}^{\gamma 2^{-n}}(\unit-P_{\frac{i-1}{2^n},\frac{i}{2^n}})\bigr)\prod_{i\notin S}P_{\frac{i-1}{2^n},\frac{i}{2^n}}\xrightarrow{~n\to\infty~}\mathrm{e}^{0}=\unit
\end{displaymath}
in the strong topology, which completes the proof.  
\end{proof}

%\bibliography{mybib}

\newcommand{\Swap}[2]{#2#1}\newcommand{\Sort}[1]{}
\providecommand{\bysame}{\leavevmode\hbox to3em{\hrulefill}\thinspace}
\providecommand{\MR}{\relax\ifhmode\unskip\space\fi MR }
% \MRhref is called by the amsart/book/proc definition of \MR.
\providecommand{\MRhref}[2]{%
  \href{http://www.ams.org/mathscinet-getitem?mr=#1}{#2}
}
\providecommand{\href}[2]{#2}

\noindent
B.V.\ Rajarama Bhat: {\it Statistics and Mathematics Unit, Indian Statistical Institute Bangalore, R.\ V.\ College Post, Bangalore 560059, India},\\
E-mail: {\footnotesize\tt bhat@isibang.ac.in},\\
Homepage: {\footnotesize\tt http://www.isibang.ac.in/Smubang/BHAT/}

\vspace{2ex}\noindent
Volkmar Liebscher: {\it Institut f\"ur Mathematik und Informatik, Ernst-Moritz-Arndt-Universit\"at Greifswald, 17487 Greifswald, Germany},\\
E-mail: {\footnotesize\tt volkmar.liebscher@uni-greifswald.de},
Homepage:\\ {\scriptsize\tt  http://www.math-inf.uni-greifswald.de/mathe/index.php?id=97:volkmar-liebscher}

\vspace{2ex}\noindent
Mithun Mukherjee: {\it Statistics and Mathematics Unit, Indian Statistical Institute Bangalore, R.\ V.\ College Post, Bangalore 560059, India},\\
E-mail: {\footnotesize\tt mithun@isibang.ac.in}

\vspace{2ex}\noindent
Michael Skeide: {\it Dipartimento S.E.G.e S., Universit\`a degli Studi del Molise, Via de Sanctis, 86100 Campobasso, Italy},\\
E-mail: {\footnotesize\tt skeide@math.tu-cottbus.de},
Homepage:\\ {\footnotesize\tt http://www.math.tu-cottbus.de/INSTITUT/lswas/\_skeide.html}

\end{document}